\renewcommand{\le}{\leqslant}
\renewcommand{\ge}{\geqslant}
\renewcommand{\dashrightarrow}
{\text{\raisebox{0.9mm} {\
\begin{tikzpicture}[->,thick,xscale=0.56]
  \draw[dashed] (0,0)--(1,0)
;
\end{tikzpicture}}\ }}
\renewcommand{\dashleftarrow}
{\text{\raisebox{0.9mm} {\
\begin{tikzpicture}[<-,thick,xscale=0.56]
  \draw[dashed] (0,0)--(1,0)
;
\end{tikzpicture}}\ }}
\newcommand{\lin}{\,\frac{}{\quad}\,}
\newcommand{\li}[1]{\ \frac{#1}
{\qquad}\ }
\newtheorem{theorem}{Theorem}
\newtheorem{lemma}[theorem]{Lemma}
\theoremstyle{definition}
\newtheorem{definition}[theorem]{Definition}
\begin{document}

\title{Cycles of linear and semilinear mappings}

\author[br]{Debora Duarte de Oliveira}
\ead{d.duarte.oliveira@gmail.com}

\author[br]{Vyacheslav Futorny}
\ead{futorny@ime.usp.br}

\author[klim]{Tatiana Klimchuk}
\ead{klimchuk.tanya@gmail.com}

\author[klim]{Dmitry Kovalenko}
%\ead{?????@gmail.com}

\author[serg]{Vladimir V. Sergeichuk\corref{cor}}
\ead{sergeich@imath.kiev.ua}

\address[br]{Department of Mathematics, University of S\~ao Paulo, Brazil.}

\address[klim]
{Faculty of Mechanics and Mathematics,
Kiev National Taras Shevchenko
University, Kiev, Ukraine.}

\address[serg]
{Institute of Mathematics,
Tereshchenkivska 3, Kiev, Ukraine.}

\cortext[cor]{Corresponding author.}

\begin{abstract}
We give a canonical form of matrices of
a cycle of linear or semilinear mapping
$V_1\,\text{---}\,V_2\,
\text{---}\:\cdots\:\text{---}\,V_t\,
\text{---}\,V_1$ in which all $V_i$ are
complex vector spaces, each line is an
arrow $\longrightarrow$ or
$\longleftarrow$, and each arrow
denotes a linear or semilinear mapping.
\end{abstract}

\begin{keyword}
Linear and semilinear mappings\sep
Similarity and consimilarity\sep Cycles
of mappings\sep Canonical forms\sep
Pencils and contragredient pencils

\MSC 15A04\sep 15A21
\end{keyword}

\maketitle

\section{Introduction}

A mapping ${\cal A}$ from a complex
vector space $U$ to a complex vector
space $V$ is \emph{semilinear} if
\begin{equation*}\label{lkhe}
{\cal A}(u+u')={\cal A}u+{\cal A}u',\qquad
{\cal A}(\alpha u)=\bar\alpha {\cal A}u
\end{equation*}
for all $u,u'\in U$ and $\alpha
\in\mathbb C$. We write ${\cal A}: U\to
V$ if ${\cal A}$ is a linear mapping
and ${\cal A}: U\dashrightarrow V$
(using a dashed arrow) if ${\cal A}$ is
a semilinear mapping.

We give a canonical form of matrices of
a \emph{cycle} of linear and semilinear
mappings
\begin{equation}\label{jsttw}
\begin{split}
{\cal A}: \qquad\xymatrix{
{V_1}
\ar@{-}@/^2pc/[rrrr]^{\mathcal A_t}
\ar@{-}[r]^{\ \ \mathcal A_1}&
V_2\ar@{-}[r]^{\mathcal A_2} &
{\ \dots\ }&{V_{t-1}}
\ar@{-}[l]_{\mathcal A_{t-2}}
\ar@{-}[r]^{\ \mathcal A_{t-1}\ \ }&{V_t}}
\end{split}
\end{equation}
in which each line is a full arrow
$\longrightarrow$ or $\longleftarrow$,
or a dashed arrow $\dashrightarrow$ or
$\dashleftarrow$.

Its special cases are the canonical
forms of
\begin{itemize}
  \item matrix pencils,
      contragredient matrix
      pencils, and pairs consisting
      of a linear
mapping and a semilinear mapping
      (i.e., cycles
      $V_1\begin{matrix}
      \longrightarrow \\[-3.5mm]
      \longrightarrow
    \end{matrix}V_2$,
$V_1\begin{matrix}
      \longrightarrow \\[-3.5mm]
      \longleftarrow
    \end{matrix}V_2$,
and $V_1\begin{matrix}
      \longrightarrow \\[-3.5mm]
      \dashrightarrow
    \end{matrix}V_2$); their
canonical matrices were obtained by
Kronecker \cite{kro},
Dobrovol$'$skaya and Ponomarev
\cite{dob+pon} (see also
      \cite{hor+mer}), and
Djokovi\'{c} \cite{djok};

  \item cycles of linear mappings
      (all arrows in \eqref{jsttw}
      are full); their canonical
      form is well known in the
      theory of quiver
      representations; see
      \cite[Section 11.1]{gab_roi}
      and \cite{ser_cycl}.
\end{itemize}

The proof is based on the canonical
forms of linear and semilinear
operators (i.e., cycles
\!\!\raisebox{-2mm}
      {\begin{tikzpicture}[->,thick,xscale=1.3,
      yscale=1.3]
  \node (1) {$V_1$\!\!}; \path (1)
    edge [loop right]
(1);
\end{tikzpicture}}\!\!
and \!\!\raisebox{-2mm}
      {\begin{tikzpicture}[->,thick,xscale=1.3,
      yscale=1.3]
  \node (1) {$V_1$\!\!}; \path
    (1) edge [dashed,loop right]
(1);
\end{tikzpicture}}\!\!) given
by Jordan \cite[p. 125]{jordan} and
Haantjes \cite{haa} (see also Theorem
\ref{tej}). We use the methods of
\cite{n-r-s-b}.

In Section  \ref{s2} we formulate
Theorem \ref{t1.1} about a canonical
form of matrices of \eqref{jsttw}; in
Section \ref{s3} we prove it as
follows. In Section \ref{ss1} we reduce
the proof of Theorem \ref{t1.1} to
cycles \eqref{jsttw} in which $t\ge 2$
and the mappings $\mathcal A_1,\dots,
\mathcal A_{t-1}$ are linear. In
Section \ref{ss2} we give a canonical
form of the matrices of $\mathcal
A_1,\dots, \mathcal A_{t-1}$. In
Section \ref{ss3} we give a canonical
form of the matrix of $\mathcal A_t$
with respect to those transformations
that preserve the matrices of $\mathcal
A_1,\dots, \mathcal A_{t-1}$. In
Section \ref{ss4} we complete the proof
of Theorem \ref{t1.1}.

All matrices and vector spaces that we
consider are over the field of complex
numbers.

\section{A canonical form of matrices of
a cycle} \label{s2}

We denote by $[v]_e$ the coordinate
vector of $v$ in a basis
$e_1,\dots,e_n$, and by $S_{e\to e'}$
the transition matrix from
$e_1,\dots,e_n$ to a basis
$e'_1,\dots,e'_n$.  We write $\bar
A:=[\bar a_{ij}]$ for a matrix
$A=[a_{ij}]$.

Let ${\cal A}: U\dashrightarrow V$  be
a semilinear mapping. We say that an
$m\times n$ matrix ${\cal A}_{fe}$ is
the \emph{matrix of $\cal A$ in bases}
$e_1,\dots,e_n$ of $U$ and
$f_1,\dots,f_m$ of $V$ if
\begin{equation}\label{feo2}
[{\cal A}u]_f=\overline{{\cal A}_{fe}[u]_e}\qquad
\text{for all }u\in U.
\end{equation}
Therefore, the columns of ${\cal
A}_{fe}$ are $\overline{[{\cal
A}e_1]_f}, \dots, \overline{[{\cal
A}e_n]_f}$. We write ${\cal A}_{e}$
instead of ${\cal A}_{ee}$ if $U=V$.

The \emph{direct sum} of matrix
sequences $A=(A_1,\dots,A_t)$ and
$B=(B_1,\dots,B_t)$ is the sequence
\[
A\oplus B:=\left(
\begin{bmatrix}
  A_1 & 0 \\
  0 & B_1 \\
\end{bmatrix},\dots, \begin{bmatrix}
  A_t & 0 \\
  0 & B_t \\
\end{bmatrix}
\right).
\]

For each $k = 1,2,\dots$, define the
matrices
\begin{equation*}\label{1aaa}
J_k(\lambda):=\begin{bmatrix}
\lambda&1&&0\\&\lambda&\ddots&\\&&\ddots&1
\\ 0&&&\lambda
\end{bmatrix}\qquad (\text{$k$-by-$k$, }\lambda
\in\mathbb C),
\end{equation*}
\begin{equation*}\label{kebw}
H_{2k}(\mu):=\begin{bmatrix}
              0 & I_k \\
              J_{k}(\mu) & 0 \\
            \end{bmatrix}\qquad
(\text{$2k$-by-$2k$, }\mu
\in\mathbb C),
\end{equation*}
and
\begin{equation}\label{1.4}
F_n=\begin{bmatrix}
0&1&&0\\&\ddots&\ddots&\\0&&0&1
\end{bmatrix},\quad
G_n=\begin{bmatrix}
1&0&&0\\&\ddots&\ddots&\\0&&1&0
\end{bmatrix}\qquad
(\text{$(n-1)$-by-$n$}).
\end{equation}

The following theorem is the main
result of the article.

\begin{theorem}
\label{t1.1} For each system of linear
and semilinear mappings \eqref{jsttw},
there exist bases of the spaces
$V_1,\dots,V_t$ in which the sequence
of matrices of
${\cal{A}}_{1},\dots,{\cal{A}}_{t}$ is
a direct sum, determined by
\eqref{jsttw} uniquely up to
permutation of summands, of sequences
of the following form $($in which the
points denote sequences of identity
matrices or $0_{00})$:
\begin{itemize}

\item[\rm{(i)}]
    \begin{itemize}

\item[$\bullet$]
    $(J_n(\lambda),\ldots)$ in
    which $\lambda\ne 0$, if
    the number of dashed arrows
    in \eqref{jsttw} is even,

\item[$\bullet$]
    $(J_n(\lambda),\ldots)$ and
    $(H_{2k}(\mu),\ldots)$ in
    which $\lambda $ is real
      and positive and $\mu $
      is either not real or is
      real and negative, if the
    number of dashed arrows in
    \eqref{jsttw} is odd;
\end{itemize}

\item[\rm{(ii)}] $(\ldots, J_n(0),
    \ldots)$ with $J_n(0)$ at
    position $i\in \{1,\ldots,
    t\}$;

\item[\rm{(iii)}] $(\ldots,
    A_i,\ldots, A_j,\ldots)$ in
    which
\begin{itemize}
  \item[$\bullet$]
      $(A_i,A_j)=(F_n,G_n)$ or
      $(F_n^T,G_n^T)$ if ${\cal
      A}_i$ and ${\cal
A}_j$ have opposite directions
in
\begin{equation}\label{1.6}
  V_1\li{{\cal A}_1}
  V_2\li{{\cal A}_2}  \cdots
  \li{{\cal A}_{t-1}}
  V_t\li{{\cal A}_t} V_1
\end{equation}

  \item[$\bullet$] $(A_i,A_j)=
      (F_n,G_n^T)$ or
      $(F_n^T,G_n)$ if ${\cal
      A}_i$ and ${\cal A}_j$
have the same direction in
\eqref{1.6}.

\end{itemize}
\end{itemize}
\end{theorem}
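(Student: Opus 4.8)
The plan is to run the four-step reduction announced in the introduction, converting the cyclic system \eqref{jsttw} into a single-operator problem handled by Theorem \ref{tej}. If $t=1$ the system is one linear or semilinear operator, which is exactly Theorem \ref{tej}, so I assume $t\ge 2$. \emph{Step 1} is to make $\mathcal A_1,\dots,\mathcal A_{t-1}$ linear. The device is that replacing a space $V_i$ by its complex-conjugate space $\overline{V_i}$ switches the type (linear $\leftrightarrow$ semilinear) of both arrows incident to $V_i$ and changes nothing else; since every vertex $V_2,\dots,V_t$ meets exactly two arrows, such a replacement alters the number of dashed arrows by $0$ or $\pm2$, so the \emph{parity} of that number is invariant. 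Scanning $\mathcal A_1,\dots,\mathcal A_{t-1}$ in order and conjugating $V_{i+1}$ whenever $\mathcal A_i$ is still semilinear, I make all of $\mathcal A_1,\dots,\mathcal A_{t-1}$ linear; the type of the remaining arrow $\mathcal A_t$ is then forced — linear if the number of dashed arrows is even, semilinear if it is odd — which is the origin of the even/odd dichotomy in (i).

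\emph{Step 2.} Dropping the closing arrow $\mathcal A_t$, I bring the remaining chain of linear maps $\mathcal A_1,\dots,\mathcal A_{t-1}$ to canonical form by independent changes of basis in $V_1,\dots,V_t$. This chain is a representation of a quiver of type $A_t$, hence a direct sum of \emph{interval} representations, each supported on a run of consecutive vertices with identity maps inside and zero maps outside. I then record the stabilizer of this decomposition — all tuples $(S_1,\dots,S_t)$ of basis changes fixing the matrices of $\mathcal A_1,\dots,\mathcal A_{t-1}$ — which is a group of block matrices governed by the overlaps of the intervals, and it is this group that governs the next step.

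\emph{Step 3 (the crux).} I reduce the closing arrow $\mathcal A_t$, which joins the two end vertices $V_1$ and $V_t$, modulo this stabilizer, in the spirit of \cite{n-r-s-b}. Grouping the interval summands by which of $V_1,V_t$ they reach, the summands reaching both ends feed their end-components into $\mathcal A_t$ so that the composition around the cycle \eqref{1.6} becomes a single operator: in the even case a linear one, reduced up to similarity to Jordan blocks $J_n(\lambda)$ with $\lambda\ne0$, and in the odd case a semilinear one, reduced up to consimilarity by Haantjes' form to the blocks $J_n(\lambda)$ with $\lambda$ real positive and $H_{2k}(\mu)$; this yields (i). The remaining summands — those reaching only one end, together with the nilpotent part of this operator — give, by the combinatorics of strings winding around the cycle, the position-indexed blocks $J_n(0)$ of (ii) (a string of constant dimension) and the rectangular blocks $F_n,G_n$ and their transposes of (iii) (a string whose dimension jumps up and down once), the same/opposite-direction alternative in (iii) recording the relative orientation of the two arrows involved in \eqref{1.6}.

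\emph{Step 4.} Finally I translate the resulting blocks back into matrix sequences indexed by the cycle, undo the conjugations of Step 1, and verify that the list is precisely (i)--(iii); uniqueness up to permutation follows from Krull--Schmidt, since each listed sequence has a local endomorphism algebra and is therefore indecomposable. The main obstacle is the reduction of $\mathcal A_t$ in Step 3: isolating the consimilarity invariants in the odd case — which is exactly where the restriction to $\lambda$ real positive and the paired blocks $H_{2k}(\mu)$ enter, through Haantjes' theorem — and the bookkeeping that redistributes the nilpotent and non-square data into the cyclically-placed blocks (ii) and (iii) without breaking the symmetry of \eqref{jsttw}.
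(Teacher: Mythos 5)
Your architecture coincides with the paper's: the conjugation trick $V_k\rightsquigarrow\bar V_k$ to make $\mathcal A_1,\dots,\mathcal A_{t-1}$ linear (with the parity of the number of dashed arrows as the invariant), the decomposition of the resulting $A_t$-quiver representation into interval summands, the reduction of the closing arrow $\mathcal A_t$ modulo the stabilizer of that decomposition, and Krull--Schmidt for uniqueness. The gap is that your Step 3 --- which you yourself flag as the crux --- is a description of the expected answer rather than an argument. The stabilizer computed in Lemma \ref{xtu}(b) is not block-diagonal but only block-\emph{triangular} (with the extra constraint $R_{ll}=S_{ll}$ on the block corresponding to type-$(1,t)$ intervals), so you cannot simply ``group the interval summands by which of $V_1,V_t$ they reach'' and reduce each group independently: the off-diagonal blocks of $R$ and $S$ couple the strips of $A_t$ through row and column additions. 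The paper isolates exactly this situation as a ``marked block matrix problem'' (Definition \ref{zzz}) and solves it in Lemma \ref{kwq} by induction on the size of the matrix, with a case split on whether the pivot block is crossed; that induction is the actual content of the proof and is absent from your proposal.

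Two specific points that your sketch would have to confront and does not. First, after splitting off the nonsingular part of the crossed block (which yields your sequences (i) via the Jordan form or Theorem \ref{tej}), the nilpotent remainder is \emph{not} yet a direct summand: one must show that its interaction with the uncrossed strips can be disentangled, and in the consimilarity case the re-partitioned nilpotent Jordan structure produces new diagonal blocks that are crossed alternately by dashed and full lines according to the parity of the Jordan block sizes (the $\bar S^{-1}M_{pq}S=M_{pq}$ computation with $S=S_1\oplus S_2\oplus\bar S_2\oplus\cdots$). This alternation is essential for the induction to close and is invisible in your outline. Second, the passage from the conclusion of Lemma \ref{kwq}(b) (a $0$--$1$ matrix with at most one $1$ per row and column) to the string sequences (ii) and (iii) --- including which of $F_n,G_n,F_n^T,G_n^T$ appears and how that depends on the relative orientation of the two arrows in \eqref{1.6} --- requires the graph-theoretic argument of Section \ref{ss4} showing that an indecomposable summand of type (b) is a single connected chain. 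Your plan would work, but as written it asserts the conclusions of Lemmas \ref{xtu}(b) and \ref{kwq} where the paper has to prove them.
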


Note that $F_1$ and $G_1$ in
\eqref{1.4} have the size $0\times 1$.
It is agreed that there exists exactly
one matrix, denoted by $0_{n0}$, of
size $n\times 0$ and there exists
exactly one matrix, denoted by
$0_{0n}$, of size $0\times n$ for every
nonnegative integer $n$; they represent
the linear mappings $0\to {\mathbb
C}^n$ and ${\mathbb C}^n\to 0$ and are
considered as zero matrices. Then
$$
M_{pq}\oplus 0_{m0}=\begin{bmatrix}
  M_{pq} & 0 \\
  0 &0_{m0}
\end{bmatrix}=\begin{bmatrix}
  M_{pq}& 0_{p0} \\
  0_{mq}& 0_{m0}
\end{bmatrix}=\begin{bmatrix}
M_{pq} \\ 0_{mq}
\end{bmatrix}
$$
and
$$
M_{pq}\oplus 0_{0n}=\begin{bmatrix}
  M_{pq} & 0 \\
  0 & 0_{0n}
\end{bmatrix}=\begin{bmatrix}
  M_{pq}& 0_{pn} \\
  0_{0q}& 0_{0n}
\end{bmatrix}=\begin{bmatrix}
   M_{pq} & 0_{pn}
\end{bmatrix}
$$
for every $p\times q$ matrix $M_{pq}$.

\section{Proof of Theorem \ref{t1.1}}
\label{s3}

\subsection{Reduction to cycles in which $t\ge 2$
and $\mathcal A_1,\dots,\mathcal
A_{t-1}$ are linear}\label{ss1}

The matrix of the composition of a
linear mapping and a semilinear mapping
is given in the following lemma.

\begin{lemma}\label{kowq}
Let $e_1,e_2,\dots,$ be a basis of a
vector space $U$, $f_1,f_2,\dots,$ be a
basis of $V$, and $g_1,g_2,\dots$ be a
basis of  $W$.
\begin{itemize}
  \item[\rm (a)] The composition of
      a linear mapping ${\mathcal
      A}: U\to V$ and a semilinear
      mapping ${\mathcal B}:
      V\dashrightarrow W$ is the
      semilinear mapping with
      matrix
\begin{equation}\label{lif}
(\mathcal B \mathcal
      A)_{ge}=\mathcal B_{gf}
      \mathcal A_{fe}
\end{equation}

 \item[\rm (b)] The composition of
     a semilinear mapping
     ${\mathcal A}:
     U\dashrightarrow V$ and a
     linear mapping ${\mathcal B}:
     V\to W$ is the semilinear
     mapping with matrix
\begin{equation}\label{kif}
(\mathcal B \mathcal
A)_{ge}=\overline{\mathcal B}_{gf} \mathcal
A_{fe}
\end{equation}

\end{itemize}
\end{lemma}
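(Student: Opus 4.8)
The plan is to verify both identities by direct computation from the definition \eqref{feo2} of the matrix of a semilinear mapping, carefully tracking where complex conjugation enters. First I would record that in both cases the composition $\mathcal{B}\mathcal{A}$ is itself semilinear, so that it has a well-defined matrix $(\mathcal{B}\mathcal{A})_{ge}$ governed by \eqref{feo2}: for part (a), $\mathcal{B}\mathcal{A}(\alpha u)=\mathcal{B}(\alpha\mathcal{A}u)=\bar\alpha\,\mathcal{B}\mathcal{A}(u)$ using linearity of $\mathcal{A}$ and semilinearity of $\mathcal{B}$, and for part (b) the same conclusion follows from semilinearity of $\mathcal{A}$ and linearity of $\mathcal{B}$. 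It then remains only to identify each matrix.

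For part (a), I would take an arbitrary $u\in U$ and compute $[\mathcal{B}\mathcal{A}u]_g$ step by step: since $\mathcal{B}$ is semilinear, $[\mathcal{B}(\mathcal{A}u)]_g=\overline{\mathcal{B}_{gf}[\mathcal{A}u]_f}$, and since $\mathcal{A}$ is linear, $[\mathcal{A}u]_f=\mathcal{A}_{fe}[u]_e$. Substituting gives $[\mathcal{B}\mathcal{A}u]_g=\overline{\mathcal{B}_{gf}\mathcal{A}_{fe}[u]_e}$, and comparing with \eqref{feo2} applied to $\mathcal{B}\mathcal{A}$ yields $(\mathcal{B}\mathcal{A})_{ge}=\mathcal{B}_{gf}\mathcal{A}_{fe}$, which is \eqref{lif}.

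For part (b), the linear and semilinear steps occur in the reverse order, and this is where the only genuine bookkeeping arises. Using linearity of $\mathcal{B}$ first, $[\mathcal{B}(\mathcal{A}u)]_g=\mathcal{B}_{gf}[\mathcal{A}u]_f$; then semilinearity of $\mathcal{A}$ gives $[\mathcal{A}u]_f=\overline{\mathcal{A}_{fe}[u]_e}$. The resulting expression $\mathcal{B}_{gf}\,\overline{\mathcal{A}_{fe}[u]_e}$ must be rewritten in the form $\overline{(\,\cdots)[u]_e}$ demanded by \eqref{feo2}; writing $\mathcal{B}_{gf}=\overline{\overline{\mathcal{B}_{gf}}}$ and pulling the conjugation outside produces $\overline{\overline{\mathcal{B}_{gf}}\,\mathcal{A}_{fe}[u]_e}$, whence $(\mathcal{B}\mathcal{A})_{ge}=\overline{\mathcal{B}}_{gf}\,\mathcal{A}_{fe}$, which is \eqref{kif}.

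The main, and essentially the only, obstacle is purely notational: ensuring that the conjugation in the defining relation \eqref{feo2} is applied to the correct factor, so that in part (b) the matrix of $\mathcal{B}$ appears conjugated while that of $\mathcal{A}$ does not. There are no structural difficulties beyond this careful tracking of bars.
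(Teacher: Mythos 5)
Your proposal is correct and follows essentially the same route as the paper: both parts are verified by computing $[(\mathcal B\mathcal A)u]_g$ directly from the definition \eqref{feo2}, with the conjugation pulled outside in part (b) exactly as in the paper's displayed chain of equalities. The only (welcome) addition is your explicit check that the composition is semilinear, which the paper merely asserts.
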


\begin{proof}
The identity \eqref{lif} follows from
observing that ${\cal AB}$ is a
semilinear mapping and
\[
[({\cal B}{\cal A})u]_{g}=
[{\cal B}({\cal A}u)]_{g}=
\overline{{\cal B}_{gf}[{\cal
A}u]_{f}}= \overline{({\cal
B}_{gf}{\cal A}_{fe})[u]_{e}}
\]
for each $u\in U$. The identity
\eqref{kif} follows from observing that
${\cal AB}$ is a semilinear mapping and
\[
[({\cal B}{\cal A})u]_{g}=
[{\cal B}({\cal A}u)]_{g}=
{\cal B}_{gf}[{\cal
A}u]_{f}= {\cal
B}_{gf}\overline{{\cal A}_{fe}[u]_{e}}=
\overline{(\overline{{\cal
B}}_{gf}{\cal A}_{fe})[u]_{e}}
\]
for each $u\in U$.
\end{proof}

Let $\mathcal A: U\dashrightarrow V$ be
a semilinear operator, let ${\cal
A}_{fe}$  be its matrix in bases
$e_1,\dots,e_m$ of $U$ and
$f_1,\dots,f_n$ of $V$, and let ${\cal
A}_{f'e'}$ be its matrix in other bases
$e'_1,\dots,e'_m$ and
$f'_1,\dots,f'_n$. Then
\begin{equation*}\label{swk3}
{\cal A}_{f'e'}=\bar S_{f\to f'}^{-1}{\cal A}_{fe}
S_{e\to e'}
\end{equation*}
since the right hand matrix satisfies
\eqref{feo2} with $e',f'$ instead of
$e,f$:
\[
\overline{\bar S_{f\to f'}^{-1}{\cal A}_{fe}
S_{e\to e'}[v]_{e'}}=
S_{f\to f'}^{-1}\overline{{\cal A}_{fe}
[v]_{e}}=S_{f\to f'}^{-1}[{\cal A}
v]_f=[{\cal A}
v]_{f'}
\]
In particular, if $U=V$, then
\begin{equation*}\label{swk4}
{\cal A}_{e'}=\bar S_{e\to e'}^{-1}{\cal A}_{e}
S_{e\to e'}
\end{equation*}
and so ${\cal A}_{e'}$ and ${\cal
A}_{e}$ are consimilar; recall that two
matrices $A$ and $B$ are
\emph{consimilar} if there exists a
nonsingular matrix $S$ such that $\bar
S^{-1}AS=B$.

The following canonical form of a
matrix under consimilarity was obtained
in \cite[Theorem 3.1]{hon-hor}; see
also \cite[Theorem 4.6.12] {HJ12}.

\begin{theorem}[{\cite{hon-hor,HJ12}}]
\label{tej} Each square complex
matrix is consimilar to a direct sum,
uniquely determined up to permutation
of direct summands, of matrices of the
following three types:
\begin{description}
  \item[Type 0:] $J_k(0),\ k =
      1,2,\dots$;
  \item[Type I:] $J_k(\lambda  ),\
      k = 1,2,\dots,$ in which
      $\lambda $ is real and
      positive;
  \item[Type II:] $H_{2k}(\mu)$, $k
      = 1,2,\dots,$ in which $\mu $
      is either not real or is real
      and negative.
\end{description}
\end{theorem}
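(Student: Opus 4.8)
The plan is to recast consimilarity of matrices as isomorphism of semilinear operators and to control everything through the linear operator obtained by squaring. Given a square matrix $A$, let $\mathcal A$ be the semilinear operator on $\mathbb C^n$ with $\mathcal A_e=A$ in the standard basis $e$; by the change-of-basis formula $\mathcal A_{e'}=\bar S^{-1}\mathcal A_e S$ established above, two matrices are consimilar precisely when they represent one semilinear operator in two bases, so it suffices to classify semilinear operators up to change of basis. The key observation is that $\mathcal A^2$ is a \emph{linear} operator, and that consimilar matrices $A,B=\bar S^{-1}AS$ satisfy $B\bar B=\bar S^{-1}(A\bar A)\bar S$, so the similarity class of $A\bar A$ (equivalently, of $\mathcal A^2$) is a consimilarity invariant. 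First I would therefore reduce the problem to understanding, for each Jordan structure of $\mathcal A^2$, which semilinear square roots $\mathcal A$ exist and how they appear in a good basis.

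Next I would decompose $\mathbb C^n$ into the generalized eigenspaces $V_\lambda$ of $\mathcal A^2$. Since $\mathcal A$ is semilinear, $\mathcal A(\mathcal A^2-\lambda I)^k=(\mathcal A^2-\bar\lambda I)^k\mathcal A$, so $\mathcal A$ carries $V_\lambda$ onto $V_{\bar\lambda}$. This splits the space into $\mathcal A$-invariant pieces of three kinds: $V_0$; a single $V_\lambda$ with $\lambda$ real and positive; and a conjugate pair $V_\lambda\oplus V_{\bar\lambda}$ with $\lambda$ non-real or real negative (when $\lambda$ is real negative one has $V_\lambda=V_{\bar\lambda}$, and $\mathcal A$ acts on it with square in a single negative eigenvalue). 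On each piece it remains to produce the asserted normal form.

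The heart of the proof is the square-root construction on each piece, and this is where I expect the main difficulty. On a positive-real piece I would first rescale: replacing $\mathcal A$ by $\mathcal A/\sqrt\lambda$ (legitimate since $\sqrt\lambda$ is real) makes $\mathcal A^2$ unipotent, and I would then build a semilinear Jordan basis string by string, obtaining a direct sum of blocks $J_k(\lambda)$ of Type~I; positivity of $\lambda$ is forced because $A\bar A$ arising from a single real block is $J_k(\lambda)^2$, whose eigenvalue $\lambda^2$ is positive, and because $J_k(\lambda)$ and $J_k(-\lambda)$ are consimilar (via $S=iD$ with $D=\mathrm{diag}(1,-1,\dots)$), so the positive root is the canonical choice. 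On a conjugate pair, $\mathcal A$ interchanges the two summands; choosing a Jordan basis of $\mathcal A^2|_{V_\lambda}$ and transporting it by $\mathcal A$ realizes $\mathcal A$ in the block form $H_{2k}(\mu)$ of Type~II, and this simultaneously explains why a negative real eigenvalue can occur only with even multiplicity: it is produced solely by blocks $H_{2k}(\mu)$ with $\mu$ real negative, each contributing the pair $J_k(\mu)\oplus J_k(\mu)$ to $A\bar A$. Finally $V_0$ is handled by the nilpotent case, giving the Type~0 blocks $J_k(0)$. The delicate points throughout are the correct choice of string vectors so that the semilinear action lands exactly on $J_k$ or $H_{2k}$, and the self-conjugate negative-real case, where $\mathcal A$ maps $V_\lambda$ to itself yet must still be shown to split into $H_{2k}$ summands.

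It then remains to prove uniqueness. For this I would read off $A\bar A$ from the claimed canonical form: a Type~I block $J_k(\lambda)$ contributes $J_k(\lambda^2)$ with positive eigenvalue, a Type~II block $H_{2k}(\mu)$ contributes $J_k(\mu)\oplus J_k(\bar\mu)$, and a Type~0 block $J_k(0)$ contributes the nilpotent $J_k(0)^2$. Since the similarity class of $A\bar A$ is a consimilarity invariant, the multiset of these Jordan blocks is determined by $A$; together with the conventions fixing the positive square root in Type~I and the conjugate pairing in Type~II, this pins down the multiset of summands uniquely up to permutation, completing the proof.
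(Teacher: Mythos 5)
First, note that the paper does not actually prove Theorem \ref{tej}: it is quoted from Hong and Horn \cite{hon-hor} and Horn--Johnson \cite{HJ12}, so there is no internal proof to compare yours against. Your outline does follow the standard route of those sources (interpret consimilarity as change of basis for a semilinear operator, use that the similarity class of $A\bar A$ is a consimilarity invariant, and decompose into generalized eigenspaces of the squared operator), so the overall strategy is the right one.

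There is, however, a genuine gap in your uniqueness argument. You claim that the multiset of Jordan blocks of $A\bar A$, together with the sign convention in Type I and the conjugate pairing in Type II, pins down the multiset of summands. This fails for the Type 0 summands: the passage from the nilpotent canonical form to the Jordan type of $A\bar A$ is not injective. Concretely, for $A=J_2(0)$ one has $A\bar A=J_2(0)^2=0$, and for $A=J_1(0)\oplus J_1(0)$ one also has $A\bar A=0$; the two candidate canonical forms produce the same $A\bar A$ yet are not consimilar, since their ranks differ. More generally $J_k(0)\,\overline{J_k(0)}=J_k(0)^2$ has Jordan type $J_{\lceil k/2\rceil}(0)\oplus J_{\lfloor k/2\rfloor}(0)$, so $A\bar A$ only sees these halved blocks. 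To separate the Type 0 summands you need additional consimilarity invariants, for instance the ranks of the alternating words $A$, $\bar AA$, $A\bar AA$, $\bar AA\bar AA,\dots$; this is exactly how the cited sources complete the uniqueness proof. A secondary gap sits on the existence side: you never establish that the Jordan blocks of $A\bar A$ at a negative real eigenvalue occur in pairs --- deducing this from the canonical form is circular at that stage --- and the self-conjugate negative-real piece, which you yourself flag as delicate, is precisely where a separate argument (not sketched in your proposal) is required.
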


It suffices to prove Theorem \ref{t1.1}
for cycles $\cal A$ of the form
\eqref{jsttw} in which
\begin{equation}\label{dsr}
    t\ge 2\text{ and the mappings $\mathcal
      A_1,\dots, \mathcal A_{t-1}$
      are linear}
\end{equation}
since Theorem \ref{t1.1} for
      the cycles
      \!\!\raisebox{-2mm}
      {\begin{tikzpicture}[->,thick,xscale=1.3,
      yscale=1.3]
  \node (1) {$V_1$\!\!}; \path
    (1) edge [dashed,loop right]
(1);
\end{tikzpicture}}\!\! and  \!\!\raisebox{-2mm}
      {\begin{tikzpicture}[->,thick,xscale=1.3,
      yscale=1.3]
  \node (1) {$V_1$\!\!}; \path (1)
    edge [loop right]
(1);
\end{tikzpicture}}\!\!
of length $t=1$
  follows from
the Jordan canonical form and Theorem
\ref{tej}, and all $\mathcal
      A_1,\dots, \mathcal A_{t-1}$
can be made linear mappings by using
      the following procedure:

\begin{equation}\label{4KR}
\parbox{25em}
{Let not all $\mathcal
      A_1,\dots, \mathcal A_{t-1}$
      be linear and let $\mathcal
      A_{k-1}$ for some $k\le t$ be
      the first semilinear mapping.
      Denote by $\mathcal A^{(k)}$
      the cycle obtained from
      $\mathcal A$ by replacing the
      vector space $V_k$ by the
      vector space
$\bar V_k$ defined as follows:
$\bar V_k$
      consists of the same elements
      as $V_k$, the addition in
$\bar V_k$ is the same as in $V_k$,
and the multiplication (which we
denote by ``$\circ$'') in $\bar
V_k$ is defined via the
multiplication in $V_k$  by $c\circ
v=\bar cv$ for all $c\in\mathbb C$
and $v\in \bar V_k$.
}
\end{equation}

The semilinear mapping
      $\mathcal A_{k-1}$ becomes
      linear in $\mathcal A^{(k)}$.
      Indeed,
if $V_{k-1}\!\!\stackrel{\mathcal
A_{k-1}}{\dashrightarrow}\!\! V_k$,
      then $\mathcal
      A_{k-1}(cu)=\bar c\mathcal
      A_{k-1}u=c\circ \mathcal
      A_{k-1}u$ for all
      $c\in\mathbb C$ and $u\in
      V_{k-1}$. If
$V_{k-1}\!\!\stackrel{\mathcal
A_{k-1}}{\dashleftarrow}\!\! V_k$,
then $\mathcal A_{k-1}(c\circ
      v)=\mathcal A_{k-1}(\bar
      cv)=c\mathcal A_{k-1}v$ for
      all $c\in\mathbb C$ and $v\in
      V_{k}$.

 Similarly, the linear or
      semilinear mapping $\mathcal
      A_{k}$ becomes semilinear or,
      respectively, linear in
      $\mathcal A^{(k)}$.

We repeat procedure \eqref{4KR}
      until obtain a chain in which
      the first $t-1$ mappings are
      linear.

\subsection{A canonical form of the
system of matrices of $\mathcal
A_1,\dots, \mathcal
A_{t-1}$}\label{ss2}

Deleting $\mathcal A_t$ in
\eqref{jsttw} satisfying \eqref{dsr},
we obtain a system of linear mappings
\begin{equation}\label{aau}
{\mathbb{A}} :\quad V_1
\li{{\cal{A}}_1}  V_2
\li{{\cal{A}}_2}
  \cdots
  \li{{\cal{A}}_{t-1}}
  V_t
\end{equation}
in which each line is either
$\longrightarrow$ or $\longleftarrow$.
The classification of such systems is
well known in the theory of quiver
representations (see, for example,
Gabriel's article \cite{gab}, in which
he introduced the notion of quiver
representations and described all
quivers with a finite number of
nonisomorphic indecomposable
representations). We recall this
classification in Lemma \ref{xtu} in a
form that is used in the next section
for reducing the matrix of $\mathcal
A_t$ to canonical form.

A \emph{chain of type $(p,q)$} of
system \eqref{aau}, $1\le p\le q\le t$,
is a sequence of nonzero elements
\[(v_p,v_{p+1},\dots,v_q)\in V_p\times
V_{p+1}\times \dots\times V_q\] such
that
${\cal{A}}_{1},\dots,{\cal{A}}_{t}$ act
on them as follows:
\begin{equation*}\label{dfyj}
0\li{{\cal{A}}_{1}}\cdots
\li{{\cal{A}}_{p-2}}
0\li{{\cal{A}}_{p-1}}
  v_p\li{{\cal{A}}_p}
\cdots\li{{\cal{A}}_{q-1}}
 v_q\li{{\cal{A}}_q}
0\li{{\cal{A}}_{q+1}}\cdots
\li{{\cal{A}}_{t-1}}
 0
\end{equation*}
in which the arrows are directed as in
\eqref{aau}; for simplicity of
notation, we write
$0\xrightarrow{{\cal{A}}_{p-1}} v_p$
instead of
$0\xrightarrow{{\cal{A}}_{p-1}} 0$ and
$v_q\xleftarrow{{\cal{A}}_q} 0$ instead
of $0\xleftarrow{{\cal{A}}_q} 0$.

\begin{lemma}
\label{xtu}
\begin{itemize}
  \item[\rm(a)] For each system of
      linear mappings \eqref{aau},
      we can choose bases
      $E_1,\dots,E_t$ of the spaces
      $V_1,\dots,V_t$ so that
      $E_1\cup\dots\cup E_t$
      consists of disjoint chains.
      The sequence of their types
\begin{equation}\label{kjke}
\underbrace{
(p_1,q_1),\dots,(p_1,q_1)}_{r_1\text{ times}},\dots,
\underbrace{(p_s,q_s),\dots,
(p_s,q_s)}_{r_s\text{ times}},
\end{equation}
in which $(p_i,q_i)\ne(p_j,q_j)$ if
$i\ne j$, is determined by
\eqref{aau} uniquely up to
permutation.

\item[\rm(b)] Suppose that the
    types in \eqref{kjke} are
    numbered so that for each two
    pairs $(p_k,q_k)$ and
    $(p_l,q_l)$
\begin{itemize}

\item[\rm(i)] if $p_k=p_l=1$
    and $q_k<q_l$, then either
    $V_{q_k}\xrightarrow{\mathcal
    A _{q_k}} V_{q_{k+1}}$ and
    $k<l$, or
    $V_{q_k}\xleftarrow{\mathcal
    A _{q_k}} V_{q_{k+1}}$ and
    $k>l$;

\item[\rm(ii)] if $p_k<p_l$ and
    $q_k=q_l=t$, then either
    $V_{p_k-1}\xrightarrow{\mathcal
    A _{p_k-1}} V_{p_k}$ and
    $k>l$, or
    $V_{p_k-1}\xleftarrow{\mathcal
    A _{p_k-1}} V_{p_k}$ and
    $k<l$.
\end{itemize}

Define nonnegative integers
$m_1,\dots,m_s$ and $n_1,\dots,n_s$
by \eqref{kjke} as follows:
\begin{equation}\label{dvu}
m_i:=\begin{cases}
r_i &\text{if }p_i=1,\\
0 &\text{if }p_i>1,\\
\end{cases}\qquad
n_i:=\begin{cases}
r_i &\text{if }q_i=t,\\
0 &\text{if }q_i<t.\\
\end{cases}
\end{equation}

Then the following two conditions
      are equivalent for a pair of
      nonsingular matrices $R$ and
      $S$:
\begin{itemize}
  \item[\rm(i$'$)] There exists
      another system of bases
      $F_1,\dots,F_t$ of the
      spaces $V_1,\dots,V_t$
      that consists of disjoint
      chains with the same
      sequence \eqref{kjke} of
      their types such that $R$
      and $S$ are the change of
      basis matrices from $E_1$
      to $F_1$ in $V_1$ and
      from $E_t$ to $F_t$ in
      $V_t$.

  \item[\rm(ii$'$)] $R$ and $S$
      are upper block
      triangular matrices
\begin{equation}\label{ftn}
R=\begin{bmatrix}
    R_{11}&\dots &R_{1s} \\
&\ddots&\vdots\\
    0 && R_{ss} \\
  \end{bmatrix},\qquad
S=\begin{bmatrix}
    S_{11}&\dots &S_{1s} \\
&\ddots&\vdots\\
    0 && S_{ss} \\
  \end{bmatrix},
\end{equation}
in which every $R_{ii}$ is
$m_i\times n_i$ and every
$S_{ii}$ is $n_i\times n_i$
$($see \eqref{dvu}$)$. If there
is $l$ such that
$(p_l,q_l)=(1,t)$ in
\eqref{kjke}, then
\begin{equation}\label{kus}
R_{ll}=S_{ll}.
\end{equation}
\end{itemize}

\end{itemize}
\end{lemma}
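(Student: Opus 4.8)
The plan is to read the system \eqref{aau} as a representation of the path quiver on the vertices $1,2,\dots,t$ (a Dynkin graph of type $A_t$) and to obtain part (a) from the standard decomposition theory of such representations. A chain of type $(p,q)$ is exactly an indecomposable representation supported on the vertices $p,p+1,\dots,q$ whose interior structure maps are identities, that is, the interval module $I[p,q]$. Since the underlying graph is a Dynkin graph, Gabriel's theorem \cite{gab} identifies every indecomposable with such an $I[p,q]$, and the Krull--Schmidt theorem yields a decomposition into indecomposables that is unique up to isomorphism. To prove (a) I would choose in each summand $I[p_i,q_i]$ a nonzero vector and propagate it along the module through the interior (identity) maps; this produces one chain of type $(p_i,q_i)$, and collecting these over all summands gives bases $E_1,\dots,E_t$ whose union is a disjoint family of chains. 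The multiset \eqref{kjke} of their types then coincides with the multiset of isomorphism classes of the indecomposable summands and is therefore uniquely determined.

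For part (b) the first step is to describe the pairs $(R,S)$ of (i$'$) intrinsically. Two chain bases $E$ and $F$ with the same type sequence \eqref{kjke} give two decompositions of $\mathbb A$ into indecomposables with identical multiplicities, so by the exchange property in Krull--Schmidt there is an automorphism $\phi=(\phi_1,\dots,\phi_t)$ of $\mathbb A$ carrying the $E$-chains onto the $F$-chains; its components on $V_1$ and on $V_t$ are exactly the transition matrices $R$ and $S$, and conversely every automorphism produces such an $F$. Thus (i$'$) is equivalent to saying that $(R,S)$ lies in the image of the restriction map $\operatorname{Aut}(\mathbb A)\to GL(V_1)\times GL(V_t)$, and all of (b) reduces to computing this image.

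I would compute it from the homomorphisms between chains. Writing $\phi$ as a block matrix of maps $I[p_i,q_i]^{r_i}\to I[p_j,q_j]^{r_j}$, one uses that $\operatorname{Hom}(I[p_i,q_i],I[p_j,q_j])$ is at most one-dimensional and is nonzero only for a prescribed overlap of the intervals, the precise condition depending on the arrow directions at the two ends. A chain feeds into $V_1$ only when its left end is the vertex $1$ and into $V_t$ only when its right end is $t$, so $R$ only involves the types with $p_i=1$ (the numbers $m_i$) and $S$ only those with $q_i=t$ (the numbers $n_i$). The numbering fixed by conditions (i) and (ii) is designed precisely so that, after restricting to these $V_1$- and $V_t$-components, every nonzero homomorphism runs from a smaller to a larger index; this makes $R$ and $S$ the upper block-triangular matrices \eqref{ftn}. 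The diagonal contributions are the endomorphisms of the isotypic components and hence arbitrary invertible blocks $R_{ii},S_{ii}$ as in \eqref{ftn}; and for a chain of type $(1,t)$ the identity structure maps link its $V_1$-end to its $V_t$-end, so the corresponding automorphism block acts the same way on both ends, which is precisely the coupling \eqref{kus}. This gives (i$'$)$\Rightarrow$(ii$'$).

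The hard part will be the converse (ii$'$)$\Rightarrow$(i$'$): realizing a prescribed block-triangular pair $(R,S)$ satisfying \eqref{kus} by an honest re-choice of chains. The obstacle is that $R$ and $S$ only prescribe the new vectors in $V_1$ and $V_t$, whereas each new chain must be defined coherently in all of $V_2,\dots,V_{t-1}$. I would build $F$ by letting each diagonal block recombine the chains of a single type, each strictly upper block add to a chain of type $(p_j,q_j)$ an admissible multiple of a dominating chain (admissibility being guaranteed by the same $\operatorname{Hom}$-computation that produced the triangular pattern), and then propagate the prescribed $V_1$- and $V_t$-data along each chain through the interior maps. The delicate verification is that on a chain of type $(1,t)$ the two propagations --- one from the $V_1$-end governed by $R$, one from the $V_t$-end governed by $S$ --- agree, which happens exactly when \eqref{kus} holds. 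Carrying out this coherence check, together with the bookkeeping that matches the $\operatorname{Hom}$-nonvanishing conditions to the orderings in (i) and (ii), is the technical heart of the argument.
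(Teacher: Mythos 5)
Your proposal is correct and follows essentially the same route as the paper: part (a) is the standard decomposition of a type $A_t$ quiver representation into interval modules (the paper likewise defers it to the literature), and your computation of the image of $\operatorname{Aut}(\mathbb{A})$ in $GL(V_1)\times GL(V_t)$ via the one-dimensional $\operatorname{Hom}$ spaces between interval modules is exactly the paper's explicit chain-addition argument in categorical dress, including the coupling $R_{ll}=S_{ll}$ forced by chains of type $(1,t)$. The converse (ii$'$)$\Rightarrow$(i$'$), which you rightly flag as the place where coherence across $V_2,\dots,V_{t-1}$ must be checked, is dispatched by the paper with the single remark that it follows from the preceding reasoning, so your sketch supplies at least as much detail as the paper's own.
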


\begin{proof}[Sketch of the proof]
(a) This statement (in another form) is
given in many articles; see, for
example \cite[Section 4]{ser_cycl}.

(b) (i$'$)$\Rightarrow$(ii$'$) Let $R$
and $S$ satisfy (i$'$).

The change of basis matrix $R$ from
$E_1$ to $F_1$ in $V_1$ has the upper
block triangular form \eqref{ftn} by
the following reason. Let
$E_1\cup\dots\cup E_t$ contain two
chains of basis vectors
\begin{align*}
\mathbb  E&:\ \    e_{1}\lin
 \cdots\lin e_{q_k}\li{\mathcal
A_{q_k}} 0\lin
\cdots\lin 0\\
\mathbb  E'&:\ \  e'_{1}\lin
\cdots\lin e'_{q_k}\li{\mathcal
A_{q_k}}
e'_{q_k+1}\lin \cdots\lin
e'_{q_l}\lin 0\lin \cdots\lin
0
\end{align*}
of types $(1,q_k)$ and $(1,q_l)$ with
$q_k<q_l$. We consider two cases
according to the direction of $\mathcal
A_{q_k}$:
\begin{itemize}
  \item If
      $V_{q_k}\xrightarrow{\mathcal
      A_{q_k}} V_{q_k+1}$, then
\[
\mathbb  E'+\alpha
\mathbb  E :\quad  e'_{1}+\alpha
e_{1}\lin \cdots\lin
e'_{q_k}+\alpha
e_{q_k}\xrightarrow{\mathcal
      A_{q_k}} e'_{q_k+1}\lin
\cdots\lin
0
\]
for each $\alpha \in\mathbb C$, and
so we can replace the chain
$\mathbb E'$ by $\mathbb E'+\alpha
\mathbb E$ in $E_1\cup\dots\cup
E_t$. Thus, the basis vector
$e'_{1}\in E_1$ can be replaced by
$e'_{1}+\alpha e_{1}$, and so the
block $R_{kl}$ in \eqref{ftn} is
arbitrary. Due to the condition (i)
in (b), $k<l$.

  \item If
      $V_{q_k}\xleftarrow{\mathcal
      A_{q_k}} V_{q_k+1}$, then
\[
\mathbb  E+\alpha
\tilde{\mathbb  E}' :\quad  e_{1}+\alpha
e'_{1}\lin \cdots\lin e_{q_k}+\alpha
e'_{q_k}\xleftarrow{\mathcal
      A_{q_k}}
0\lin \cdots\lin
0,
\]
in which $\alpha \in\mathbb C$ and
$\tilde{\mathbb
E}'=\{e'_1,\dots,e_{q_k}'\}$ is the
subchain of $\mathbb  E'$. Hence we
can replace the chain $\mathbb E$
by $\mathbb E+\alpha \tilde{\mathbb
E}'$, and so $e_1\in E_1$ can be
replaced by $e_1+\alpha e'_1$.
Therefore, the block $R_{lk}$ in
\eqref{ftn} is arbitrary. Due to
the condition (i) in (b), $k>l$.
\end{itemize}

Analogously, the change of basis matrix
$S$ from $E_t$ to $F_t$ in $V_t$ has
the upper block triangular form
\eqref{ftn} by the following reason.
Let $E_1\cup\dots\cup E_t$ contain two
chains of basis vectors
\begin{align*}
\mathbb  E
 :\  \
0\lin \cdots\lin 0\li{\mathcal
      A_{p_k-1}} e_{p_k}
\lin \cdots\lin e_{t}&\\
\mathbb  E':\ \ 0\lin
\cdots\lin 0\lin
 e'_{p_l}\lin \cdots\lin
e'_{p_k-1}\li{\mathcal
      A_{p_k-1}} e'_{p_k}\lin
\cdots\lin
e'_{t}&
\end{align*}
of types $(p_k,t)$ and $(p_l,t)$ with
$p_l<p_k$.

\begin{itemize}
  \item If
      $V_{p_k-1}\xrightarrow{\mathcal
      A_{p_k-1}} V_{p_k}$, then
\[
\mathbb  E+\alpha
\tilde{\mathbb  E}' :\ \
0\lin \cdots\lin 0\xrightarrow{\mathcal
      A_{p_k-1}}
e_{p_k}+\alpha e'_{p_k}\lin
\cdots\lin e_{t}+\alpha
e'_{t},
\]
in which $\alpha \in\mathbb C$ and
$\tilde{\mathbb  E}'=\{e'_{p_k},
\dots, e'_{t}\}$ is the subchain of
$\mathbb  E'$. Hence, we can
replace the chain $\mathbb E$ by
$\mathbb E+\alpha \tilde{\mathbb
E}'$, and so the basis vector
$e_{t}\in E_t$ can be replaced by
$e_{t}+\alpha e'_{t}$. Therefore,
the block $R_{lk}$ in \eqref{ftn}
is arbitrary. Due to the condition
(ii) in (b), $k>l$.

  \item If
      $V_{p_k-1}\xleftarrow{\mathcal
      A_{p_k-1}} V_{p_k}$, then
\[
\mathbb  E'+\alpha
\mathbb  E:\ \
0\lin \cdots\lin
e'_{p_k-1}\xleftarrow{\mathcal
      A_{p_k-1}}
e'_{p_k}+\alpha e_{p_k}\lin
\cdots\lin
e'_{t}+\alpha e_{t}
\]
for each $\alpha \in\mathbb C$, and
so we can replace the chain
$\mathbb E'$ by $\mathbb E'+\alpha
\mathbb  E$. Thus, the basis vector
$e'_{t}\in E_t$ can be replaced by
$e'_{t}+\alpha e_{t}$. Therefore,
the block $R_{kl}$ in \eqref{ftn}
is arbitrary. Due to the condition
(ii) in (b), $k<l$.
\end{itemize}

Let $l$ be such that $(p_l,q_l)=(1,t)$
in \eqref{kjke}, let $r_l\ge 2$, and
let
\begin{align*}
\mathbb  E&:\quad  e_{1}\lin
 e_{2}\lin
\:\cdots\:\lin e_{t}\\
\mathbb  E' &:\quad  e'_{1}\lin
 e'_{2}\lin
\cdots\lin e'_{t}
\end{align*}
be two chains. Then
\[
\mathbb  E+\alpha
\mathbb  E':\ \  e_{1}
+\alpha e'_{1}\lin
 e_{2}+\alpha e'_{2}\lin
\:\cdots\:\lin e_{t}+\alpha e'_{t}
\]
for each $\alpha \in\mathbb C$. Hence
the basis vectors $e_{1}\in E_1$ and
$e_{t}\in E_t$ can be simultaneously
replaced by $e_{1}+\alpha e'_{1}$ and
$e_{t}+\alpha e'_{t}$. Therefore,
$R_{ll}=S_{ll}$ in \eqref{ftn}.

We have shown that $R$ and $S$ satisfy
(ii$'$).

(i$'$)$\Leftarrow$(ii$'$) This
implication follows from the above
reasoning.
\end{proof}

\subsection{A canonical form of the
matrix of $\mathcal A_t$}\label{ss3}

Let $\cal A$ be a cycle of linear and
semilinear mappings \eqref{jsttw}
satisfying the condition \eqref{dsr}.
We suppose that $V_1\!\stackrel{{\cal
A} _t}{\longrightarrow}\! V_t$ or
$V_1\!\!\!\stackrel{{\cal A}
_t}{\dashrightarrow}\!\!\! V_t$
 (if $V_1\stackrel{{\cal A}
_t}{\longleftarrow} V_t$ or
$V_1\!\!\stackrel{{\cal A}
_t}{\dashleftarrow}\!\! V_t$, then we
renumber the vector spaces
$V_1,\dots,V_t$ in the reverse order).

Deleting $\mathcal A_t$ from
\eqref{jsttw}, we obtain the chain of
linear mappings \eqref{aau}. By Lemma
\ref{xtu}, there exists a system of
      bases $E_1,\dots,E_t$ of the
      spaces $V_1,\dots,V_t$ that
      consists of disjoint chains.
Moreover, the bases $E_1$ and $E_t$ are
determined up to replacement by bases
$F_1$ and $F_t$ such that the change of
basis matrices $R$ and $S$ have the
form \eqref{ftn}. Thus, the matrix
$A_t$ of the linear or semilinear
mapping $\mathcal A_t$ is reduced by
transformations
\begin{equation}\label{lot}
A_t\mapsto S^{-1}A_tR\text{ or }
\bar S^{-1}A_tR,
\qquad \text
{$R$ and $S$ have the form \eqref{ftn}.}
\end{equation}
This leads to the following definition.

\begin{definition}\label{zzz}
By a \emph{marked block matrix} we mean
a block matrix in which some of square
blocks are crossed along their main
diagonal by  a full or dashed line and
each horizontal or vertical strip
contains at most one crossed block. A
\emph{block matrix problem} is the
canonical form problem for marked block
matrices with respect to the following
admissible transformations:
\begin{itemize}
  \item[(i)] arbitrary elementary
      transformations within
      vertical and horizontal
      strips such that each crossed
      block is transformed by
similarity transformations if its
      cross-line is full and by
      consimilarity transformations
      if the cross-line is dashed;

  \item[(ii)] additions of a row
      multiplied by scalar to a row
      of a horizontal strip that is
      located above, and of a
      column multiplied by scalar
      to a column of a vertical
      strip that is located to the
      right.
\end{itemize}
We say that $M$ and $N$ are
\emph{equivalent} if $M$ is reduced to
$N$ by transformations (i) and (ii).
The problem is to find a canonical form
of a marked block matrix up to
equivalence.
\end{definition}

We use the methods of \cite{n-r-s-b},
in which the block matrix problem was
solved for marked block matrices
without blocks that are crossed by
dashed lines.

It follows from the form of $R$ and $S$
in \eqref{ftn} that the canonical form
problem for $A_t$ with respect to
transformations \eqref{lot} is the
block matrix problem for $A_t$
partitioned into horizontal strips
conformally to the partition of $S$ and
into vertical strips conformally to the
partition of $R$; the condition
\eqref{kus} means that the $(l,l)$
block of $A_t$ is crossed by a full or
dash line. (Note that $A_t$ may contain
horizontal strips with no rows and
vertical strips with no columns, which
follows from \eqref{dvu}.)

Let $M=[M_{ij}]$ and $N=[N_{ij}]$ be
two block matrices with the same number
of horizontal strips, the same number
of vertical strips, the same
disposition of blocks crossed by full
lines, and the same disposition of
blocks crossed by dashed lines. The
\emph{block direct sum} of $M$ and $N$
is the block matrix
\begin{equation*}\label{few}
 M\boxplus N:=[M_{ij}\oplus N_{ij}]
\end{equation*}
with the same disposition of blocks
crossed by full lines and the same
disposition of blocks crossed by dashed
lines. We say that $M$ is
\emph{indecomposable} if it is not
$0$-by-$0$ and it is not equivalent to
a block direct sum of matrices of
smaller sizes.

We say that a matrix is \emph{empty} if
it does not contain rows or columns.

\begin{lemma}\label{kwq}
Let a marked block matrix $M$ be
indecomposable. Then either
\begin{itemize}
  \item[\rm(a)] all blocks of $M$
      are empty except for one
      crossed block that is
      nonsingular, or
  \item[\rm(b)] $M$ is equivalent
      to a matrix whose entries are
      only 0's and 1's with at most
      one 1 in each row and each
      column.
\end{itemize}
\end{lemma}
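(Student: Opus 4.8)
The plan is to reduce a marked block matrix to a canonical form by a Gauss-Jordan type elimination, and show that if the matrix is indecomposable, then the elimination either leaves it already in the $0/1$ form of case (b), or concentrates all nonzero entries into a single nonsingular crossed block, giving case (a). The key observation is that the admissible transformations of Definition~\ref{zzz} are rich enough to perform independent row reduction inside each horizontal strip and column reduction inside each vertical strip (transformations of type~(i)), subject to the constraint that a crossed block must be transformed by similarity or consimilarity. The cross-constraint is the only thing that prevents the problem from being a trivial matrix-equivalence problem, so the whole argument revolves around how crossed blocks interact with the uncrossed ones around them.

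First I would handle the uncrossed blocks. Since each horizontal strip and each vertical strip contains at most one crossed block, any block $M_{ij}$ that is uncrossed can be reduced: using elementary row operations within strip $i$ and column operations within strip $j$ that do not disturb a crossed block, one can bring $M_{ij}$ to a matrix having at most one $1$ in each row and column and $0$'s elsewhere. The additions of type~(ii)---adding a row of a lower strip to a higher strip, or a column of a right strip to a left strip---are then used to clear out the remaining nonzero entries lying in the same rows/columns as these pivotal $1$'s, sweeping the interaction with other blocks. This is exactly the elimination scheme of \cite{n-r-s-b}; the new ingredient here is the presence of dashed (consimilarity) crosses alongside full (similarity) crosses. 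Because $V_1,\dots,V_t$ are complex spaces and the relevant canonical forms under both similarity (Jordan) and consimilarity (Theorem~\ref{tej}) are available, each crossed block can independently be brought to its own canonical form, so the dashed case is treated in parallel with the full case without new difficulty.

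Next I would analyze the crossed block itself. If a crossed block $M_{ll}$ is \emph{singular}, then by a similarity or consimilarity transformation (depending on whether its cross-line is full or dashed) one can split off a zero eigenspace, i.e. decompose $M_{ll}$ as a direct sum in which one summand is a smaller crossed block and the complement can be merged into the uncrossed reduction above, producing a nontrivial block direct sum $\boxplus$ and contradicting indecomposability. Hence in an indecomposable matrix every crossed block is either empty or nonsingular. Combining this with the elimination of the uncrossed blocks, I would argue that indecomposability forces one of two extremes: either some crossed block is nonsingular and, after sweeping, all other blocks become empty (case~(a)), or no crossed block carries any mass and the entire matrix is equivalent to a $0/1$ matrix with at most one $1$ per row and column (case~(b)).

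The hard part will be the bookkeeping that shows the elimination respects the cross-constraint and genuinely yields a block direct sum whenever the matrix is decomposable---in particular, checking that clearing entries outside a pivot via type~(ii) additions does not reintroduce nonzero entries that couple the putative summands, and that splitting a singular crossed block via (con)similarity is compatible with the simultaneous row/column structure of the surrounding strips. One must verify that after fixing a crossed block in canonical form, the residual freedom in the other blocks is precisely ordinary matrix equivalence, so that the classical reduction applies. I expect the consimilarity case to require the most care: the transformation $A_t\mapsto \bar S^{-1}A_t R$ in \eqref{lot} mixes conjugation with the strip operations, so one must confirm that the conjugation on a dashed crossed block does not obstruct the sweeping additions in the other, purely equivalence-governed, strips.
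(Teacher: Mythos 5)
There is a genuine gap, and it sits at the center of your argument. You claim that if a crossed block $M_{ll}$ is singular then one can ``split off a zero eigenspace'' and thereby contradict indecomposability, concluding that in an indecomposable $M$ every crossed block is empty or nonsingular. This is false. A crossed block equal to a nilpotent Jordan block $J_n(0)$ with $n\ge 1$ is singular, nonempty, and indecomposable under similarity (and likewise under consimilarity, by Theorem~\ref{tej}); the marked block matrix consisting of that single crossed block is indecomposable and lands in case~(b), not case~(a). These are exactly the objects that produce the summands $(\ldots,J_n(0),\ldots)$ of type~(ii) in Theorem~\ref{t1.1}, so they cannot be argued away. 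What is true is only the weaker statement the paper uses: writing $M_{pq}=N\oplus J$ with $N$ nonsingular and $J$ nilpotent, indecomposability forces one of the two summands to be empty. The nilpotent part $J$ cannot be ``merged into the uncrossed reduction,'' because its superdiagonal $1$'s tie the row transformations of its strip to the column transformations of the same strip via the (con)similarity constraint; it never becomes a block governed by ordinary equivalence.

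Consequently the hard case --- a crossed block that is a nonzero nilpotent Jordan matrix --- is entirely missing from your proposal, and it is the technical heart of the paper's proof. There one rearranges the Jordan blocks as in \eqref{dwi}, deletes the strips passing through the superdiagonal identities to obtain a smaller marked block matrix $M''$, and must verify (statement \eqref{4.26}) that every admissible transformation of $M''$ lifts to one of $M$ preserving $M_{pq}$. In the dashed (consimilarity) case this requires choosing $S=S_1\oplus S_2\oplus\bar S_2\oplus S_3\oplus\bar S_3\oplus S_3\oplus\cdots$ so that $\bar S^{-1}M_{pq}S=M_{pq}$, with the consequence that the new diagonal zero blocks coming from even-sized Jordan blocks are crossed by \emph{full} lines while those from odd-sized blocks stay dashed --- a bookkeeping subtlety your parenthetical remark that ``the dashed case is treated in parallel \dots\ without new difficulty'' glosses over. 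Your treatment of the uncrossed blocks is also lighter than what is needed (when the new division lines cut through a crossed block one must subdivide it into four subblocks with crossed diagonal subblocks before inducting), but the decisive error is the false dichotomy on singular crossed blocks.
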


\begin{proof}
Let $M\ne 0$. We use induction on the
size of $M$. Let $M_{pq}$ be the lowest
nonzero block in the first nonzero
vertical strip.

\emph{Case 1: $M_{pq}$ is not crossed.}
We reduce it by transformations (i) to
the form
\begin{equation}\label{cly}
M_{pq}=\begin{bmatrix}
         0&I_k \\
         0 & 0 \\
       \end{bmatrix},\qquad k\ge 1,
\end{equation}
and then use only those transformations
(i) and (ii) that preserve it.

The matrix \eqref{cly} is partitioned
into 2 horizontal and 2 vertical
strips. We extend this partition to the
entire $p$th horizontal strip and to
the entire $q$th vertical strip of $M$.
Using transformations (ii), we make
zero all entries over $I_k$ and to the
right of $I_k$ in the new strips.

Let the new division lines do not pass
through crossed blocks. Then $I_k$ is a
direct summand of $M$. Since $M$ is
indecomposable, we have that $k=1$,
$M_{pq}=I_1$, and all other blocks are
empty, which proves the lemma in this
case.

Let a new division line pass through a
crossed block $M_{lr}$ ($l=p$ or
$r=q$). Then we draw the perpendicular
division line such that $M_{lr}$ is
partitioned into 4 subblocks with
square diagonal blocks (they are
crossed by the full or dash line that
crosses $M_{lr}$):
\begin{equation}\label{ksr}
 M_{lr}=\begin{bmatrix}
          A & B \\
          C & D \\
        \end{bmatrix},
\qquad \begin{matrix}
\text{$A$ and $D$ are crossed,} \\
\text{$A$ is $k\times k$ if $l=p$,
$D$ is $k\times k$ if $r=q$.}
\end{matrix}
\end{equation}
If a new division line passes through
another crossed block, we repeat this
procedure.

Denote by $M'$ the marked block matrix
obtained from $M$ by deleting the new
vertical and horizontal strips that
pass through $I_k$ in $M_{pq}$ (if $M$
is $m$-by-$n$, then $M'$ is
$(m-k)$-by-$(n-k)$). It is easy to see
that all transformations (i) and (ii)
with $M'$ can be obtained from
transformations (i) and (ii) with $M$.
Reasoning by induction, we assume that
the lemma holds for $M'$.

Note that $M'$ cannot satisfy the
condition (a) since $M'$ contains a
strip with $k$ rows or columns and
without crossed blocks; this strip is
obtained from the strip of $M$ that
contains $A$ in \eqref{ksr} if $l=p$ or
$D$ if $r=q$. Therefore, $M'$ satisfies
the condition (b), and so $M$ satisfies
(b) too.

\emph{Case 2: $M_{pq}$ is crossed.} The
Jordan canonical form for similarity
(if the cross-line is full) and Theorem
\ref{tej} (if the cross-line is dashed)
ensure that $M_{pq}$ can be reduced to
the form $N\oplus J$, in which $N$ is
nonsingular and $J$ is a nilpotent
Jordan matrix.

If $N$ is nonempty, then we make zero
all entries over $N$ and to the right
of $N$ by transformations (ii). Since
$M$ is indecomposable, $M_{pq}=N$ and
the other blocks of $M$ are empty, and
so $M$ satisfies the condition (a).
Thus, we can suppose that $M=J$.

Collecting the Jordan blocks of the
same size by permutations of rows and
the same permutations of columns, we
reduce $M_{pq}$ to the form
\begin{equation}\label{dwi}
M_{pq}=
\begin{MAT}(b){cccccccc}
0_u & 0&0 & 0&0&0&\cdots&\scriptstyle \it 1\\
0 & 0&I_v & 0&0&0&\cdots&\scriptstyle \it 2\\
0 & 0&0 & 0&0&0&\cdots&\scriptstyle \it 3\\
0 & 0&0 & 0&I_w&0&\cdots&\scriptstyle \it 4\\
0 & 0&0 & 0&0&I_w&\cdots&\scriptstyle \it 5\\
0 & 0&0 & 0&0&0&\cdots&\scriptstyle \it 6\\
\vdots &\vdots&\vdots &\vdots&
\vdots&\vdots&\ddots&\\
\scriptstyle \it 1&\scriptstyle \it 2
&\scriptstyle \it 3&\scriptstyle \it 4
&\scriptstyle \it 5&\scriptstyle \it 6&&
\addpath{(1,8,4)dddrrrrrdddllluuuuulll}
\addpath{(3,7,.)rrrr}
\addpath{(0,5,.)r}
\addpath{(6,5,.)r}
\addpath{(0,2,.)rrr}
\addpath{(6,2,.)r}
\addpath{(1,1,.)uuuu}
\addpath{(3,1,.)u}
\addpath{(3,7,.)u}
\addpath{(6,8,.)ddd}
\addpath{(6,2,.)d}
\addpath{(0,1,4)uuuuuuurrrrrrrdddddddlllllll}
\addpath{(0,3,.)rrrrrrr}
\addpath{(0,4,.)rrrrrrr}
\addpath{(0,6,.)rrrrrrr}
\addpath{(2,1,.)uuuuuuu}
\addpath{(4,1,.)uuuuuuu}
\addpath{(5,1,.)uuuuuuu}
\\
\end{MAT}\,,
\qquad u,v,w,\dots\ge 0.
\end{equation}
We extend the obtained partition of
$M_{pq}$ to the entire $p$th horizontal
strip and the entire $q$th vertical
strip of $M$ and make zero all entries
over and to the right of
$I_v,I_w,I_w,\dots$.

Denote by $M'$ the submatrix of $M$
obtained from $M$ by deleting the
``new'' horizontal and vertical strips
containing $I_v,I_w,I_w,\dots$. In
particular, $M_{pq}$ converts to
\begin{equation*}\label{dahr}
M'_{pq}=
\begin{MAT}(b){ccccc}
0_u & 0&0 & \cdots&\scriptstyle \it 1\\
0 & 0_v & 0&\cdots&\scriptstyle \it 3\\
0 & 0& 0_w & \cdots&\scriptstyle \it 6\\
\vdots &\vdots &\vdots&\ddots&
\\
\scriptstyle \it 1&\scriptstyle \it 2
&\scriptstyle \it 4&&
\addpath{(1,5,4)ddrrdluull}
\addpath{(0,3,.)rdd}
\addpath{(0,2,.)rrd}
\addpath{(2,5,.)drr}
\addpath{(3,5,.)ddr}
\addpath{(3,1,.)ur}
\addpath{(0,1,4)uuuurrrrddddllll}
\\
\end{MAT}
\end{equation*}
The set of crossed blocks of $M'$
consists of the crossed blocks of $M$
(except for $M_{pq}$) and the diagonal
blocks $0_u,0_v,0_w,\dots$ of $M_{pq}'$
If $M_{pq}$ is crossed by a full line,
then $0_u,0_v,0_w,\dots$ are crossed by
full lines too. If $M_{pq}$ is crossed
by a dash line, then all odd-numbered
diagonal blocks $0_u,0_w,\dots$ of
$M_{pq}$ (they are obtained from the
Jordan blocks of $M_{pq}$ of odd sizes)
are crossed by dash lines; the
even-numbered diagonal blocks
$0_v,\dots$ of $M_{pq}$ are crossed by
full lines.

Denote by $M''$ the marked block matrix
obtained from $M'$ by arrangement of
its ``new'' vertical strips (passing
through $M_{pq}'$) in reverse order. In
particular, $M'_{pq}$ converts to
\begin{equation*}\label{dhr}
M''_{pq}=
\begin{MAT}(b){ccccc}
\cdots& 0 & 0&0_u &\scriptstyle \it 1\\
\cdots&0 & 0_v & 0&\scriptstyle \it 3\\
\cdots&0_w & 0& 0 &\scriptstyle \it 6\\
\udots&\vdots &\vdots &\vdots&
\\
&\scriptstyle \it 4
&\scriptstyle \it 2&\scriptstyle \it 1&
\addpath{(3,5,4)ddlldruurr}
\addpath{(4,3,.)ldd}
\addpath{(4,2,.)lld}
\addpath{(2,5,.)dll}
\addpath{(1,5,.)ddl}
\addpath{(1,1,.)ul}
\addpath{(0,1,4)uuuurrrrddddllll}
\\
\end{MAT}
\end{equation*}

Let us prove that
\begin{equation}\label{4.26}
\parbox{25em}
{if $M$ is reduced by transformations
(i) and (ii) that preserve $M_{pq}$
(which is of the form \eqref{dwi}), then $M''$ is reduced
by transformations
(i) and (ii).}
\end{equation}

We first prove \eqref{4.26} for
transformations (ii). Preserving
$M_{pq}$, we can make the following
transformations with strips of $M$ that
 $M_{pq}$:
\begin{itemize}
  \item Add columns of the $2$nd
      vertical strip to columns of
      the $1$st vertical strip.
      Since $M_{pq}$ is transformed
      by (con)similarity
      transformations, we must do
      the (con)inverse
      transformations with rows of
      $M_{pq}$: to subtract the
      corresponding columns (or the
      complex conjugate columns) of
      the $1$st horizontal strip
      from the $2$nd horizontal
      strip, which does not change
      $M_{pq}$ since its $1$st
      horizontal strip is zero.
      Thus, we can add in
      $M_{pq}''$ columns passing
      through $0_v$ to columns
      passing through $0_u$.

  \item Add columns of the $4$th
      vertical strip to columns of
      the $1$st vertical strip; the
      (con)inverse transformations
      does not change $M_{pq}$.
      Thus, we can add in
      $M_{pq}''$ columns passing
      through $0_w$ to columns
      passing through $0_u$.

  \item Add columns of the $4$th
      vertical strip to columns of
      the $2$nd vertical strip. The
      (con)inverse transformations
      with rows of $M_{pq}$ spoil
      the (4,3) block. It is
      restored by additions of
      columns of the $5$th vertical
      strip; the (con)inverse
      transformations do not change
      $M_{pq}$. Thus, we can add in
      $M_{pq}''$ columns passing
      through $0_w$ to columns
      passing through $0_v$.
\\
      \dots

  \item Add rows of the $3$rd and
      $6$th horizontal strips to
      rows of the $1$st horizontal
      strip; the (con)inverse
      transformations with columns
      do not change $M_{pq}$. Thus,
      we can add in $M_{pq}''$ rows
      passing through $0_v$ and
      $0_w$ to rows passing through
      $0_u$.

  \item Add rows of the $6$th
      horizontal strip to rows of
      the $3$rd horizontal strip.
      The (con)inverse
      transformations with columns
      of $M_{pq}$ spoil the (2,6)
      block. It is restored by
      additions of rows of the
      $5$th horizontal strip; the
      (con)inverse transformations
      do not change $M_{pq}$. Thus,
      we can add in $M_{pq}''$ rows
      passing through $0_w$ to rows
      passing through $0_v$.\\
      \dots
\end{itemize}

Therefore, preserving $M_{pq}$ we can
make transformations (ii) with $M''$,
which proves \eqref{4.26} for
transformations (ii).

Let us prove \eqref{4.26} for
transformations (i).

If $M_{pq}$ is crossed by a full line
(i.e., $M_{pq}$ is transformed by
similarity transformations), then we
take $S:=S_1\oplus S_2\oplus S_2\oplus
S_3 \oplus S_3\oplus S_3\oplus \cdots $
(in which $S_1$ is $u\times u$, $S_2$
is $v\times v$,\dots) and obtain $
S^{-1}M_{pq}S=M_{pq}$. Thus, $M'_{pq}$
can be reduced by transformations
\[
(S_1\oplus S_2\oplus
S_3\oplus\cdots)^{-1}M'_{pq} (S_1\oplus
S_2\oplus S_3\oplus\cdots),\] and so
$M''_{pq}$ can be reduced by
transformations (i).

Assume now that $M_{pq}$ is crossed by
a dashed line; that is, $M_{pq}$ is
transformed by consimilarity
transformations. Then
\[
\bar S^{-1}M_{pq}S=M_{pq}\qquad\text{if }
 S:=S_1\oplus S_2\oplus \bar S_2\oplus S_3
\oplus \bar S_3\oplus S_3\oplus \cdots
\]
which is illustrated as follows:
\[
\begin{MAT}(b){cccccccc}
&\scriptstyle S_1&\scriptstyle S_2&
\scriptstyle \bar S_2&\scriptstyle S_3
&\scriptstyle \bar S_3&\scriptstyle S_3&\\
\scriptstyle \bar S_1^{-1}&0_u & 0&0 & 0&0&0&\cdots\\
\scriptstyle \bar S_2^{-1}&0 & 0&I_v & 0&0&0&\cdots\\
\scriptstyle S_2^{-1}&0 & 0_v&0 & 0&0&0&\cdots\\
\scriptstyle \bar S_3^{-1}&0 & 0&0 & 0&I_w&0&\cdots\\
\scriptstyle S_3^{-1}&0 & 0&0 & 0&0&I_w&\cdots\\
\scriptstyle \bar S_3^{-1}&0 & 0&0 & 0_w&0&0&\cdots\\
&\vdots &\vdots&\vdots &\vdots&
\vdots&\vdots&\ddots
\addpath{(2,7,3)dddrrrrrdddllluuuuulll}
\addpath{(2,5,4)drul}
\addpath{(1,7,4)drul}
\addpath{(4,2,4)drul}
\addpath{(4,6,.)rrrr}
\addpath{(1,4,.)r} \addpath{(7,4,.)r}
\addpath{(1,1,.)rrr} \addpath{(7,1,.)r}
\addpath{(2,0,.)uuuu}
\addpath{(4,0,.)u} \addpath{(4,6,.)u}
\addpath{(7,7,.)ddd} \addpath{(7,1,.)d}
\addpath{(1,0,4)uuuuuuurrrrrrrdddddddlllllll}
\addpath{(1,2,.)rrrrrrr}
\addpath{(1,3,.)rrrrrrr}
\addpath{(1,5,.)rrrrrrr}
\addpath{(3,0,.)uuuuuuu}
\addpath{(5,0,.)uuuuuuu}
\addpath{(6,0,.)uuuuuuu}
\\
\end{MAT}
\]
Thus, $0_u,0_w,\dots$ are transformed
by consimilarity transformations and
$0_v,\dots$ by similarity
transformations. This proves
\eqref{4.26}.

Reasoning by induction, we assume that
the lemma holds for $M''$. The matrix
$M''$ cannot satisfy the condition (a)
of the lemma since $M_{pq}''$ is
nonempty. Hence, $M''$ satisfies (b),
then $M''$ satisfies (b) too.
\end{proof}

\subsection{Completion of the proof
of Theorem \ref{t1.1}}\label{ss4}

Let $\cal A$ be a system of linear and
semilinear mappings \eqref{jsttw} whose
sequence of matrices of
${\cal{A}}_{1},\dots,{\cal{A}}_{t}$
cannot be decomposed into a direct sum
of systems of matrices of smaller
sizes. By Lemma \ref{xtu}, there exists
a system of
      bases $E_1,\dots,E_t$ of the
      spaces $V_1,\dots,V_t$ that
      falls into disjoint chains.
By Lemma \ref{kwq}, the bases
$E_1,\dots,E_t$ can be chosen such that
either
\begin{itemize}
  \item[(a)] the matrices $
      A_1,\dots,A_{t-1}$ of
      $\mathcal A_1,\dots,\mathcal
      A_{t-1}$ are the identity and
      the matrix $A_t$ of $\mathcal
      A_{t}$ is nonsingular, or
  \item[(b)] if ${\cal A}_i:V_k\to
      V_l$ or ${\cal
      A}_i:V_k\dashrightarrow V_l$
      and $e\in E_k$, then ${\cal
      A}_ie\in E_l$ or ${\cal
      A}_ie=0$; moreover, if
      $e,f\in E_k$ and ${\cal
      A}_ie={\cal A}_if\ne 0$, then
      $e=f$.
\end{itemize}

Consider the case (a). Let
$S_1,\dots,S_t$ be the change of basis
matrices that preserve
$A_1=\dots=A_{t-1}=I$, then
$S_i=S_{i+1}$ if the arrow between $i$
and $i+1$ is full and $S_i=\bar
S_{i+1}$ if the arrow is dashed.
Therefore, $A_t$ is reduced by
similarity transformations
$S^{-1}_1A_tS_1$ or consimilarity
transformations $\bar S^{-1}_1A_tS_1$
if the number of dashed arrows in
\eqref{jsttw} is even or odd,
respectively. Using the Jordan
canonical form or Theorem \ref{tej}, we
obtain a sequence of matrices (i) from
Theorem \ref{t1.1}.

Consider the case (b). Let us construct
the directed graph, whose set of
vertices is the set of basis vectors
$E_1\cup\dots\cup E_t$ and there is an
arrow from $u$ to $v$ if and only if
${\cal}A_iu=v$ for some $i=1,\dots,t$.
In the case (b), this graph is a
disjoint union of chains. Since the
sequence of matrices of
${\cal{A}}_{1},\dots,{\cal{A}}_{t}$
cannot be decomposed into a direct sum,
the graph is connected, and so it is a
chain.

For example, a system
\[
\xymatrix@R=6pt{
&&V_2\ar@{<-}[rr]^{\mathcal A_2}&&
V_3\ar@{<--}[dr]^{\mathcal A_3}&\\
V_1\ar@{->}[urr]^{\mathcal A_1}&&&&&
V_4\ar@{->}[dll]_{\mathcal A_4}\\
&V_6\ar@{<--}[ul]_{\mathcal A_6}&&
V_5\ar@{<--}[ll]_{\mathcal A_5}&&\\}
\]
may have the chain
\[
\xymatrix@R=6pt{
&&e_{21}\ar@{<-}[rr]^{\mathcal A_2}&&e_{31}
\ar@{<--}[ddr]^{\mathcal A_3}&\\
&&e_{22}
\ar@{<-}[rr]^{\mathcal A_2}&&e_{32}
\ar@{<--}[ddr]^{\mathcal A_3}&\\
e_{11}\ar@{->}[urr]^{\mathcal A_1}&&&&&
e_{41}\ar@{->}[dll]_{\mathcal A_4}\\
&e_{61}\ar@{<--}[ul]_{\mathcal A_6}&&
e_{51}\ar@{<--}[ll]_{\mathcal A_5}&&e_{42}
\ar@{->}[dll]_{\mathcal A_4}\\
&&&e_{52}\\}
\]
Its mappings ${\cal A}_1,\dots,{\cal
A}_6$ are given by the matrices
\[
A_1=\begin{bmatrix}
   0\\1
    \end{bmatrix},\quad
A_2=A_3=A_4=\begin{bmatrix}
   1&0\\0&1
    \end{bmatrix},\quad
A_5=\begin{bmatrix}
   1\\0
    \end{bmatrix},\quad
A_6=\begin{bmatrix}
   1
    \end{bmatrix}.
\]
They form a sequence (iii) from Theorem
\ref{t1.1} with $(A_i,A_j)=(A_1,A_5)=
(F_2^T,G_2^T) $. It is easy to see that
all matrix sequences (ii) and (iii)
from Theorem \ref{t1.1} can be obtained
analogously.

We have proved that for each cycle of
linear and semilinear mappings
\eqref{jsttw} there exist bases of the
spaces $V_1,\dots,V_t$ in which the
sequence of matrices of
${\cal{A}}_{1},\dots,{\cal{A}}_{t}$ is
a direct sum of sequences of the form
(i)--(iii); the uniqueness of this
direct sum follows from the
Krull--Schmidt theorem for additive
categories \cite[Chapter I, Theorem
3.6]{bas} (it holds for cycles
\eqref{jsttw} since they form an
additive category in which all
idempotents split).

\end{document}